\newtheorem{satz}{Theorem}
\newtheorem{proposition}[satz]{Proposition}
\newtheorem{theorem}[satz]{Theorem}
\newtheorem{lemma}[satz]{Lemma}
\newtheorem{corollary}[satz]{Corollary}
\newtheorem{remark}[satz]{Remark}
\newtheorem{exm}[satz]{Example}
\def\Z{\mathbb {Z}}
\def\F{\mathbb {F}}
\def\E{\mathsf{E}}
\def\a{\alpha}
\def\d{\delta}
\def\({\big (}
\def\){\big )}
\def\dim{{\rm dim}}
\def\le{\leqslant}
\def\ge{\geqslant}
\def\_phi{\varphi}
\def\eps{\varepsilon}
\def\Gr{{\mathbf G}}
\def\la{\lambda}
\def\T{\mathsf{T}}
\def\F{\mathbb {F}}
\def\R{{\mathbb R}}
\def\bp{\bigskip}
\author{Shkredov I.D.}
\title{On common energies and sumsets  
}
\date{}
\begin{document}
	\maketitle

\begin{center}
	Annotation.
\end{center}

{\it \small
    We obtain a polynomial criterion for a set to have a small doubling  in terms of the common energy of its subsets. 
}
\\

\section{Introduction}
\label{sec:intr}

Given an abelian group $\Gr$ and two sets $A,B\subseteq \Gr$, define  
the {\it sumset} 
of $A$ and $B$ as 
\begin{equation}\label{def:A+B_intr}
    A+B:=\{a+b ~:~ a\in{A},\,b\in{B}\}\,.
\end{equation}
In a similar way we define the {\it difference sets} and the {\it higher sumsets}, e.g., $2A-A$ is $A+A-A$. 
The {\it doubling constant} of a finite set $A$ is defined by formula 
\begin{equation}\label{def:doubling}
    \mathcal{D} [A] := \frac{|A+A|}{|A|} 
\end{equation}
and is an important additive--combinatorial characteristic of the set $A$. 
The study of the structure of sumsets is a fundamental  problem in  classical additive combinatorics \cite{TV}, and this relatively new branch of mathematics provides  us with a number of excellent results on this object. 
For example, Freiman's theory about the structure of the family of sets having small doubling (i.e.,  sets for which the ratio \eqref{def:doubling} is ``bounded'' with respect to  the size of $A$) seems especially complete (see, e.g.,  \cite{Freiman_book}, \cite[Section 5]{TV}  and recent results in \cite{GGMT_Marton_bounded}).
Another combinatorial concept closely related to $\mathcal{D} [A]$ is the {\it additive energy} $\E(A,A)$ or, more generally, the {\it common additive energy} $\E(A,B)$ of $A$ and $B$, which is defined as
\begin{equation}\label{def:common_energy_intr}
 \E^{} (A,B) = |\{ (a_1,a_2,b_1,b_2) \in A\times A \times B \times B ~:~ a_1 - b^{}_1 = a_2 - b^{}_2 \}| \,.
\end{equation}
A trivial application of the Cauchy--Schwarz inequality gives us the connection between the sumset of $A$ and $B$ and its  common energy
\begin{equation}\label{f:energy_CS}
    \E^{} (A,B) |A \pm B| \ge |A|^2 |B|^2 \,,
\end{equation}
but a much deeper connection is given to us by the famous Balog--Szemer\'edi--Gowers theorem (see \cite{BSz_statistical} and \cite{Gowers_4}). 

\begin{theorem}
    Let $\Gr$ be an abelian group, $A\subseteq \Gr$ be a set and $K\ge 1$ be a real number. 
    Then 
\begin{equation}\label{f:BSzG_intr}
    \E(A,A) \gg \frac{|A|^3}{K^{C_1}} \quad \quad 
    \mbox{ iff }
        \quad \quad 
    \exists A' \subseteq A ~:~ |A'+A'| \ll K^{C_2} |A| 
    \quad 
    \mbox{ and }
    \quad
    |A'| \gg \frac{|A|}{K^{C_2}} \,.
\end{equation}
    Here $C_1 = O(C_2)$ and $C_2 = O(C_1)$. 
\label{t:BSzG_intr}
\end{theorem}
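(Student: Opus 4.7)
The plan is to establish the two implications separately. For the easier reverse direction, assume $A' \subseteq A$ with $|A'| \gg |A|/K^{C_2}$ and $|A'+A'| \ll K^{C_2}|A|$; then Pl\"unnecke--Ruzsa yields $|A' - A'| \ll K^{O(C_2)}|A|$, and the Cauchy--Schwarz estimate \eqref{f:energy_CS} applied to $A'$ (with the minus sign) gives
\[
\E(A,A) \ge \E(A',A') \ge \frac{|A'|^4}{|A' - A'|} \gg \frac{|A|^3}{K^{O(C_2)}},
\]
which finishes this direction with $C_1 = O(C_2)$.

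For the harder forward direction, I would start by expanding $\E(A,A) = \sum_x r_{A-A}(x)^2$ with $r_{A-A}(x) = |A \cap (A+x)|$, and apply a dyadic pigeonhole to locate a threshold $\tau \gg |A|/K^{O(C_1)}$ together with a set $P$ of popular differences with $r_{A-A}(x) \ge \tau$ for $x \in P$, absorbing the bulk of the energy. Note $|P| \le |A|^2/\tau \ll K^{O(C_1)}|A|$ from the trivial total-count bound.

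Next I would recast the problem graph-theoretically: define the graph $G$ on $A$ with $(a,b)$ an edge iff $a - b \in P$; the energy lower bound forces at least $\gg |A|^2/K^{O(C_1)}$ edges. The central input is then Gowers's graph-theoretic Balog--Szemer\'edi lemma, proved by a random-sampling argument (pick a random vertex $v$ and let $A'$ essentially be the neighborhood of $v$, refined via a second-moment step): this extracts a vertex subset $A'$ of density $\gg 1/K^{O(C_1)}$ in $A$ such that for at least $\tfrac{1}{2}|A'|^2$ pairs $(a,a') \in A' \times A'$ there exist $\gg |A|/K^{O(C_1)}$ vertices $v$ with both $a - v \in P$ and $v - a' \in P$.

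The final step converts path abundance into a sumset bound. For each such good pair the identity $a - a' = (a - v) + (v - a')$ realises $a - a'$ as a sum of two popular differences in many ways; a double counting exploiting that each element of $P$ has $\ge \tau$ preimages in $A - A$ bounds the number of representations of elements of $A' - A'$ as quadruple differences from $A$, and yields $|A' - A'| \ll K^{O(C_1)}|A|$. Pl\"unnecke--Ruzsa then gives $|A' + A'| \ll K^{O(C_1)}|A|$, completing the forward direction with $C_2 = O(C_1)$. The principal obstacle is the graph-theoretic step: the random sampling must be arranged so that the extracted $A'$ is simultaneously large and path-rich for a positive fraction of pairs, all with polynomial quantitative control in $K$ (losing any super-polynomial factor here would collapse the equivalence).
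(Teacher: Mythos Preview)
The paper does not actually prove Theorem~\ref{t:BSzG_intr}: it is stated as the classical Balog--Szemer\'edi--Gowers theorem with citations to Balog--Szemer\'edi and to Gowers, and is then used as a black box in the proof of Theorem~\ref{t:equiv} (with the quantitative refinement $C=4$ from Schoen cited there). So there is no ``paper's own proof'' to compare against.

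Your sketch is the standard Gowers argument and is correct. A couple of minor comments. For the reverse direction you do not need Pl\"unnecke--Ruzsa at all: the Cauchy--Schwarz bound \eqref{f:energy_CS} with the plus sign already gives $\E(A,A)\ge \E(A',A')\ge |A'|^4/|A'+A'|\gg |A|^3/K^{O(C_2)}$ directly. For the forward direction, to avoid a spurious logarithmic loss in the ``dyadic pigeonhole'' step one takes the single threshold $\tau=\E(A,A)/(2|A|^2)$ rather than a genuine dyadic decomposition; then the trivial bound $\sum_{r_{A-A}(x)<\tau} r_{A-A}(x)^2<\tau|A|^2=\E(A,A)/2$ shows the popular set $P$ carries at least half the energy, with $\tau\gg |A|/K^{C_1}$ and $|P|\ll K^{C_1}|A|$ as you need. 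The rest of your outline (the random-neighbourhood extraction of $A'$ and the path-counting bound on $|A'-A'|$) is exactly Gowers's argument and goes through with polynomial dependence.
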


    In other words, the dependence  between the quantities $K^{C_1}$, $K^{C_2}$ in the criterion \eqref{f:BSzG_intr} is {\it polynomial}, and this  partially explains the importance of this remarkable result. On the other hand, the Balog--Szemer\'edi--Gowers theorem is formulated in terms of {\it subsets} of $A$, and, as is known,
    it is impossible to do without them. 

    In this paper we connect the two basic quantities \eqref{def:doubling} and \eqref{def:common_energy_intr} in another way and we give a polynomial criterion for a set to have a small doubling. 
In particular, we obtain (the complete formulation can be found in our main Theorem \ref{t:equiv}) 
for any $A\subseteq \Gr$ and an arbitrary real number $K\ge 1$  that 
\begin{equation}\label{eq:intr}
    |A+A| \ll K^{C_1} |A| 
        \quad \quad 
    \mbox{ iff }
        \quad \quad 
    \forall X,Y \subseteq A,\, 
    |X|\ge |A|/2 ~:~ \E(X,Y) \ge 
    \frac{|X| |Y|^2}{K^{C_2}} 
\end{equation}
and therefore it is possible to express the doubling constant of $A$ in terms of the common additive energy  of subsets of $A$. 
 Here, as above, $C_1 = O(C_2)$ and $C_2 = O(C_1)$.
One direction of the criterion  \eqref{eq:intr}  follows easily from \eqref{f:energy_CS}, but another one is more interesting. 
We also show that the constant $1/2$ in formula \eqref{eq:intr} is optimal and cannot be replaced by $1/2+\eps$ for any $\eps >0$, see Remark \ref{r:1/2-kappa} of Section \ref{sec:proof}.

Another remarkable result of additive combinatorics is the following Ruzsa's triangle inequality,  valid 
for any sets $A,B,C \subseteq \Gr$ 
\begin{equation}\label{f:triangle_intr}
    |A| |B-C| \le |B-A||A-C| \,.
\end{equation}
Using the common energies and other characteristics that are polynomially  equivalent to the doubling constants of our sets,  we show (see the last formula \eqref{f:triangle_E++} in Theorem  \ref{t:triangle_S} below) 
that a common energy analogue of \eqref{f:triangle_intr} takes place, namely, (all required definitions can be found in Section \ref{sec:def})  
\begin{equation}\label{f:E_triangle}
    \mathcal{E} [B;C] \le \mathcal{E} [B;A] \mathcal{E} [A;C] \,. 
\end{equation}

We hope that the tools developed in this paper will add flexibility to working with the doubling constants.

\section{Definitions}
\label{sec:def}

Below $\Gr$ denotes an abelian group with the group operation $+$. 
The sumset of two sets $A,B \subseteq \Gr$ was defined in formula \eqref{def:A+B_intr} of the introduction. 
We say that the sum of $A$ and $B$ is direct 
if $|A+B|= |A||B|$.
In this case we sometimes write $A\dotplus B$. 
The important Pl\"unnecke--Ruzsa inequality (see, e.g., \cite{Ruzsa_Plun} or \cite{TV}) says that for any positive integers $n$ and $m$ 
the following holds 
\begin{equation}\label{f:Pl-R} 
    |nA-mA| \le \left( \frac{|A+A|}{|A|} \right)^{n+m} \cdot |A| \,.
\end{equation} 
Therefore, 
bound \eqref{f:Pl-R} connects the cardinality of the 
original 
set $A$ and its higher sumsets $nA-mA$. 
The common  additive energy was defined in \eqref{def:common_energy_intr} and the generalization of the additive energy is as follows 
\begin{equation}\label{def:T_k_intr}
\T_k (f) = \sum_{a_1 + \dots + a_k = a'_1 + \dots + a'_k} f(a_1) \dots f(a_k) f(a'_1) \dots f(a'_k) \,,
\end{equation}
where $f:\Gr \to \R$ is an arbitrary  function.
We use the same capital letter to denote a set $A\subseteq \Gr$ and   its characteristic function $A: \Gr \to \{0,1 \}$. 
Thus, $\T_2 (A) = \E(A,A)$ and for brevity we will write $\E(A)$ instead of $\E(A,A)$.
Also, we  use representation function notations like  $r_{A+B} (x)$ or $r_{A-B} (x)$ and so on, which counts the number of ways $x \in \Gr$ can be expressed as a sum $a+b$ or  $a-b$ with $a\in A$, $b\in B$, respectively. 

%
%
Given finite sets $A, B\subseteq \Gr$ and a real number $T\ge 1$ let us define a generalization of the doubling constant \eqref{def:doubling} 
\[
    \mathcal{D}[A;B] := \frac{|A+B|}{|A|}\,. 
\]
    In this paper we use the following new additive--combinatorial characteristics of finite sets $A,B \subseteq \Gr$
\[
    \mathcal{S}_T [A;B] = \max_{X\subseteq A,\, |X|\ge |A|/T,\, Y \subseteq B,\, |X+Y| = |X||Y|} \frac{|X||Y|}{|A|} 
\]
\[
    \le 
    \mathcal{S}[A;B] = \mathcal{S}_\infty [A;B] := \max_{X\subseteq A,\, Y \subseteq B,\, |X+Y| = |X||Y|} \frac{|X||Y|}{|A|} \le |B| \,,
\]
    and  
\[
     \mathcal{E}_T [A;B] = \max_{X\subseteq A,\, |X|\ge |A|/T,\, Y \subseteq B} \frac{|X|^2 |Y|^2}{|A|\E(X,Y)} 
\]
\[
     \le 
      \mathcal{E} [A;B]= \mathcal{E}_\infty [A;B] := \max_{X\subseteq A,\, Y \subseteq B} \frac{|X|^2 |Y|^2}{|A| \E(X,Y)} \le |B| \,.
\]
    Clearly, the sequences $\mathcal{S}_T [A;B]$, $\mathcal{E}_T [A;B]$   increase monotonically as $T$ tends to infinity. 
%
%
%
If $A=B$, then we write $\mathcal{D}[A]$ for $\mathcal{D}[A;A]$, $\mathcal{S}_T [A]$ for $\mathcal{S}_T [A;A]$  and similarly for the  all other quantities.

\begin{exm}
    Let $A\subseteq \Gr$ be a Sidon set (see, e.g., \cite[Section 6.2]{TV}). 
    Then $\mathcal{D}[A] = (|A|+1)/2$ and $\mathcal{S}[A] \sim \mathcal{E}[A] \sim |A|$, since  we can take disjoint sets  $X,Y \subseteq A$, $|X+Y| = |X||Y|$ such that $|X| = |Y| = [|A|/2]$. 
\label{exm:Sidon}
\end{exm}

 In the general case, $\mathcal{D}[A;B] \neq \mathcal{D}[B;A]$, and similarly for other all quantities. 
Thus, our variation quantities given above are asymmetric.
It is easy to 
see 
that the quantities $\mathcal{D}[A]$ and $\mathcal{S}[A]$ are different, 
consult, for example,  
Proposition \ref{p:random_S} below. 
Finally, notice that if $A'\subseteq A$, $B'\subseteq B$ and $R:=|A|/|A'| \ge 1$, then by definition 
\begin{equation}\label{f:S_decrease}
    \mathcal{S}_T [A;B] \ge \mathcal{S}_T [A;B'],\,
    \quad \quad 
        \mbox{and}
    \quad \quad 
    R \cdot \mathcal{S}_{R T} [A;B] \ge \mathcal{S}_{T} [A';B]  \,.
\end{equation}

Recall that a set $\Lambda \subseteq \Gr$ is called {\it dissociated} if any  equality of the form 
\[
    \sum_{\la \in \Lambda} \eps_\la \la = 0 \,,
    \quad \quad 
    \mbox{ where }
    \quad \quad 
    \eps_\la \in \{ 0, \pm 1\} \,, \quad \quad  \forall \la \in \Lambda 
\]
implies $\eps_\la = 0$ for all $\la$. 
Let $\dim (A)$ be the size of the largest dissociated subset of $A$ and we call $\dim(A)$ the {\it additive dimension} of $A$.

The signs $\ll$ and $\gg$ are the usual Vinogradov symbols. 
If $a\ll b$ and $b\ll a$, then we write $a\sim b$. 
All logarithms are to base $e$.
For a prime number $p$ we write $\F_p = \Z/p\Z$.

\section{Preliminary results}
\label{sec:preliminary}

We start this section discussing some preliminary results on the quantities  $\mathcal{S}[A]$, $\mathcal{E}[A]$.
First of all, we need \cite[Theorem 1.3]{DSSS_co-Sidon} which was proved by a clever random choice. 

\begin{theorem}
    Let $A,B \subseteq \Gr$ be sets, and $E:= \E(A,B) - |A||B|$.
    Suppose that $E>|A|$ and for some positive integers  $k,l$  such that $1\le k \le |A|/2$, $1\le l \le |B|$ one has 
\begin{equation}\label{f:E_shifts}
    kl^2 \le \frac{|A|^2 |B|^2}{2E} \,.
\end{equation}
    Then there exists sets $A' \subseteq A$, $B' \subseteq B$, $|A'| = k$, $|B'|=l$ and $|A'+B'| = |A'||B'|$. 
\label{t:E_shifts}
\end{theorem}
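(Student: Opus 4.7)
The plan is to execute a two-stage probabilistic argument combined with a Tur\'an-type independent set bound. The key reformulation is that $|A' + B'| = |A'||B'|$ is equivalent to $(A' - A') \cap (B' - B') = \{0\}$: any nontrivial collision $a_1 + b_1 = a_2 + b_2$ with $(a_1, b_1) \neq (a_2, b_2)$ forces $a_1 - a_2 = b_2 - b_1$ to be a common nonzero difference. So the task reduces to finding $A' \subseteq A$ and $B' \subseteq B$ of the prescribed sizes whose nonzero difference sets are disjoint.

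The first stage would be to sample $B'$ uniformly at random among $l$-subsets of $B$. For any $d \neq 0$ one has $\E\, r_{B'-B'}(d) = \frac{l(l-1)}{|B|(|B|-1)}\, r_{B-B}(d)$, so by linearity of expectation
\[
    \E \left[ \sum_{d\neq 0} r_{A-A}(d)\, r_{B'-B'}(d) \right] = \frac{l(l-1)}{|B|(|B|-1)}\, E \le \frac{l^2 E}{|B|^2}.
\]
Fix a particular $B'$ realizing a value at most $l^2 E / |B|^2$, and let $G$ be the graph on vertex set $A$ whose edges are the unordered pairs $\{a_1, a_2\}$ with $a_1 - a_2 \in (B' - B') \setminus \{0\}$. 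The number of edges $e(G)$ satisfies $2 e(G) \le \sum_{d \neq 0} r_{A-A}(d) r_{B'-B'}(d) \le l^2 E / |B|^2$, so Tur\'an's theorem yields an independent set in $G$ of size at least $|A|^2 / (|A| + 2 e(G)) \ge |A|^2 / (|A| + l^2 E / |B|^2)$.

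It remains to verify this bound is at least $k$. From $k \le |A|/2$ one has $|A| - k \ge |A|/2$, and the hypothesis $k l^2 \le |A|^2 |B|^2 / (2E)$ gives $|A|^2/2 \ge k l^2 E / |B|^2$. Combining, $|A|(|A|-k) \ge k l^2 E / |B|^2$, which rearranges to $|A|^2 \ge k(|A| + l^2 E/|B|^2)$, as needed. Any $k$-element subset $A'$ of such an independent set then has $(A' - A') \cap (B' - B') = \{0\}$, hence $|A' + B'| = |A'||B'|$.

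The delicate point is the exactness of the constants: the factor $1/2$ in the hypothesis matches precisely the factor lost by combining the Tur\'an bound with the inequality $|A| - k \ge |A|/2$ coming from $k \le |A|/2$. A more naive symmetric approach, in which both $A'$ and $B'$ are sampled at random and bad quadruples are then removed by alteration, does not easily preserve the prescribed cardinalities; the asymmetric strategy above exploits the asymmetry $k l^2$ (rather than $k^2 l$) present in the hypothesis. The auxiliary assumption $E > |A|$ does not appear essential for this line of argument and likely serves only to ensure that the conclusion is non-vacuous.
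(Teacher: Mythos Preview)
Your argument is correct. The paper does not supply its own proof of this statement; it is quoted as \cite[Theorem 1.3]{DSSS_co-Sidon} with the remark that it ``was proved by a clever random choice.'' Your two-stage approach --- sample $B'$ uniformly among $l$-subsets of $B$ to control $\sum_{d\neq 0} r_{A-A}(d) r_{B'-B'}(d)$ in expectation, then apply the Caro--Wei/Tur\'an bound $\alpha(G)\ge |A|^2/(|A|+2e(G))$ to extract $A'$ --- is precisely such a random choice, and the arithmetic matching the constant $1/2$ in the hypothesis with the loss from $|A|-k\ge |A|/2$ is clean and tight. Your observation that the hypothesis $E>|A|$ is not actually used (only $E>0$ is needed to make the bound on $kl^2$ meaningful) is also correct; in the paper this condition merely delimits the regime in which Theorem~\ref{t:E_shifts} is invoked versus Lemma~\ref{l:E_shifts}.
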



Also, we need a simple lemma which  gives another connection between the quantity $\mathcal{S}[A;B]$ and the common additive energy $\E(A,B)$.

\begin{lemma}
    Let $A,B \subseteq \Gr$ be sets and $\kappa \in (0,1)$ be a real number.
    Suppose that $\E(A,B) = |A||B|^2/K$.
    Then there exists  $A_* \subseteq A$, $|A_*| > (1-\kappa)|A|$ and $B_* \subseteq B$, $|B_*| \gg (\kappa K)^{1/4}$  such that the sum
    $A_* + B_*$ is direct. 
\label{l:E_shifts}
\end{lemma}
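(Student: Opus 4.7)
The plan is to obtain $B_*$ by random sampling from $B$ and then delete a small set of bad elements from $A$ to form $A_*$. Include each $b \in B$ in $B_*$ independently with probability $p := c\sqrt{\kappa K}/|B|$ for a small absolute constant $c > 0$, so that the expected size of $B_*$ is $c\sqrt{\kappa K}$.

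Call an ordered pair $(a_1, a_2) \in A^2$ with $a_1 \ne a_2$ \emph{bad} if there exist $b_1, b_2 \in B_*$ satisfying $a_1 + b_1 = a_2 + b_2$ (so necessarily $b_1 \ne b_2$). Using the standard identity $\sum_x r_{A-A}(x) r_{B-B}(x) = \E(A, B)$, together with the union bound $\Pr[\{b_1, b_2\} \subseteq B_*] \le p^2$ applied to each of the $r_{B-B}(x)$ witnesses, the expected number of bad pairs is at most
$$p^2 \sum_{x \ne 0} r_{A-A}(x) r_{B-B}(x) \le p^2 \E(A, B) = c^2 \kappa |A|.$$
Let $A_\mathrm{bad}(B_*)$ denote the set of $a \in A$ appearing in at least one bad pair, so $|A_\mathrm{bad}(B_*)|$ is bounded by the number of bad pairs; Markov's inequality then gives $|A_\mathrm{bad}(B_*)| < \kappa|A|$ with probability bounded below by an absolute constant, once $c$ is chosen sufficiently small.

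Simultaneously, a Chebyshev bound (using $\mathrm{Var}(|B_*|) \le p|B|$) yields $|B_*| \ge c\sqrt{\kappa K}/2$ with probability close to $1$, provided $\sqrt{\kappa K}$ exceeds a sufficiently large absolute constant. A union bound produces a sample $B_*$ realizing both inequalities simultaneously; setting $A_* := A \setminus A_\mathrm{bad}(B_*)$, one checks that $A_* + B_*$ is direct (a collision $(a_1, b_1) \ne (a_2, b_2)$ with equal sums would force $a_1 \ne a_2$, exhibiting a bad pair with $a_1 \in A_*$---a contradiction), that $|A_*| > (1-\kappa)|A|$, and that $|B_*| \gg \sqrt{\kappa K} \ge (\kappa K)^{1/4}$. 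The corner case $\kappa K < 1$ is handled trivially by taking $B_* = \{b\}$ for any $b \in B$ and $A_* = A$.

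The principal technical point is synchronizing the Markov bound on $|A_\mathrm{bad}|$ with the Chebyshev bound on $|B_*|$ so that their intersection has positive probability, which is arranged by tuning $c$. Note that this approach in fact yields the stronger conclusion $|B_*| \gg \sqrt{\kappa K}$, which dominates the stated $(\kappa K)^{1/4}$; one could alternatively invoke Theorem \ref{t:E_shifts} directly when $\kappa \ge 1/2$ (where $(1-\kappa)|A| \le |A|/2$), but the random deletion argument seems to be the only clean way to cover the full range $\kappa \in (0,1)$.
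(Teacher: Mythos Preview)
Your argument is correct, and it is genuinely simpler and quantitatively stronger than the paper's proof. The paper proceeds in two stages: it first isolates the set $\Omega_M$ of ``heavy'' pairs $(b,b')\in B^2$ with $|(A+b)\cap(A+b')|\ge |A|/M$, samples $B_*$ sparsely enough that $B_*\times B_*$ avoids $\Omega_M$ entirely, and then deletes from $A$ the union $\bigcup_{b\ne b'\in B_*}(A-b+b')$, bounding its size crudely by $|B_*|^2\cdot |A|/M$. Optimising in $M$ forces $M\sim\sqrt{K/\kappa}$ and yields only $|B_*|\gg(\kappa K)^{1/4}$. Your one-stage approach bypasses the heavy/light dichotomy and bounds the deleted set directly by the expected number of bad quadruples $p^2\E(A,B)$, which is why you obtain the better exponent $|B_*|\gg(\kappa K)^{1/2}$. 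The paper's route does, however, give a slightly more structured $B_*$ (every pair of its elements has small overlap in $A$), which is occasionally useful elsewhere.

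One small imprecision: your corner case should be $\kappa K<C$ for a suitable absolute constant $C$ (depending on your choice of $c$), not $\kappa K<1$, since the Chebyshev step needs $c\sqrt{\kappa K}$ to exceed a fixed constant before it gives probability bounded away from~$1$. This is harmless because $(\kappa K)^{1/4}=O(1)$ throughout that range, so $|B_*|=1$ still suffices.
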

\begin{proof}
    Let $1\le M \le K$ be a parameter which we will choose later.
    Put $$\Omega_M = \{ b,b'\in B, b\neq b' ~:~ |(A+b) \cap (A+b')| \ge |A|/M \} \,.$$
    We have $\E(A,B) = \sum_{b,b'\in B} |(A+b) \cap (A+b')|$ and hence the size of $\Omega_M$ does not exceed $M|B|^2/K$ due to
\[
    \frac{|A|}{M} \cdot |\Omega_M| \le \E(A,B) - |A| |B| < \frac{|A| |B|^2}{K} \,.
\]
    Now choose a set $B_* \subseteq B$ uniformly at random with probability $\d = \sqrt{K/2M} \cdot |B|^{-1}$. 
    Then taking the expectation $\mathbb{E}$, we obtain 
\[
    \mathbb{E} |\Omega_M \cap (B_* \times B_*)| = \sum_{\a \neq \beta} \Omega_M (\a,\beta) \mathbb{E} B_* (\a) B_* (\beta) = \d^2 |\Omega_M| \,,
\]
    and therefore with high probability there is a set $B_* \subseteq B$ with $2\d |B| \ge |B_*| \ge 2^{-1}\d |B| \gg \sqrt{K/M}$ and such that $\d^2 |\Omega_M| < 1$. 
    Now let 
\begin{equation}\label{def:A_*}
    A_* = A \setminus \left( \bigcup_{b,b'\in B_*,\, b\neq b'} (A-b+b') \right)
\end{equation}
and by our construction we have
\begin{equation}\label{f:|A|/2_B}
    |A_*| > |A| - \frac{|B_*-B_*| |A|}{M} 
    \ge 
    |A| - \frac{|B_*|^2 |A|}{M}
    \ge |A| - \frac{4 K}{2M^2} \cdot |A| \ge (1-\kappa) |A|  \,,
\end{equation}
    where we have chosen $M = 2\sqrt{K/\kappa}$. 
    Finally, it is easy to see that the sum $A_* + B_*$ is direct. 
    It remains to check that $2\d |B| = 2\sqrt{K/2M} = (\kappa K)^{1/4} \le |B|$ but 
    the last follows 
    from the fact that $K\le |B|$. 
This completes the proof. 
$\hfill\Box$
\end{proof}

\bp

Theorem \ref{t:E_shifts} and Lemma \ref{l:E_shifts} (applicable in the case $E\le |A|$), as well as 
the definition of the quantity $\mathcal{E}_1 [A;B]$ 
give us, 
in particular, that for any $A,B \subseteq \Gr$ 
one has  
\begin{equation}\label{f:E_S}
    \E(A,B) \gg \frac{|A||B|^2}{\mathcal{S}^2_1 [A;B]}    
    \quad \quad 
        \mbox{and}
    \quad \quad 
    \E(A,B) \ge \frac{|A||B|^2}{\mathcal{E}_1 [A;B]}
    \,,
\end{equation}
and thanks to the second formula of \eqref{f:S_decrease}, 
we know that 
for any $A'\subseteq A$ 
the following holds 
\begin{equation}\label{f:E_S_A'}
   \E(A',B) \gg \frac{|A'|^3 |B|^2}{|A|^2 \mathcal{S}^2_T [A;B]}     
   \,,
\end{equation}
    where $T=\frac{|A|}{|A'|}$.

\bp 

Let us derive another simple 
consequence 
of Lemma \ref{l:E_shifts}.
Given a set $A\subseteq \Gr$ consider the quantity 
\[
    \mathcal{K} (A) = \min_{s \neq 0} \frac{|A|}{|A\cap (A-s)|} \,.
\]
Also, let $k(A)$ be the length of the maximal arithmetic progression in $A$. In the next corollary we show that $\mathcal{S}[A]$ controls either $k(A)$ or $\mathcal{K} (A)$. 

\begin{corollary}
    Let $A\subseteq \Gr$ be a set.
    Then either $k(A) \le 2\mathcal{S}[A]$ or 
    $\mathcal{K} (A) \le 8 \mathcal{S}[A]$. 
    In other words, 
\[
    \mathcal{S}[A] \gg \Omega ( \min \{ k(A), \mathcal{K} (A) \}) \,.
\]
\end{corollary}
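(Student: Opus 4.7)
The plan is to prove the contrapositive: setting $S := \mathcal{S}[A]$, I would assume $\mathcal{K}(A) > 8S$ and derive $k(A) \le 2S$ by contradiction. Suppose $A$ contains an arithmetic progression $P$ of length $k > 2S$ with common difference $d$. I would pick the initial sub-AP $Y \subseteq P$ of length $m := \lfloor 2S \rfloor + 1 \le k$, so that the nonzero differences $y - y'$ with $y, y' \in Y$ all lie in $\{\pm d, \pm 2d, \ldots, \pm (m-1)d\}$.

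The heart of the argument is to construct a dense $X \subseteq A$ for which the translates $\{X + y : y \in Y\}$ are pairwise disjoint; this automatically yields $|X + Y| = |X||Y|$ and hence forces $\mathcal{S}[A] \ge |X||Y|/|A|$ to be correspondingly large. Disjointness reduces to the conditions $X \cap (X + jd) = \emptyset$ for $j = 1, \ldots, m-1$, and the hypothesis bounds each $|A \cap (A - jd)|$ by $|A|/(8S)$. A trivial vertex cover of the graph on $A$ whose edges are the forbidden pairs $\{a, a+jd\}$ then removes at most $(m-1)|A|/(8S) \le |A|/4$ elements, leaving $|X| \ge 3|A|/4$.

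Combining gives $\mathcal{S}[A] \ge m|X|/|A| \ge 3m/4 > 3S/2$, contradicting $\mathcal{S}[A] = S$. The ``in other words'' conclusion $\mathcal{S}[A] \gg \min\{k(A), \mathcal{K}(A)\}$ is then immediate from the dichotomy.

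The main point I expect to need to be careful about is the \emph{simultaneous} avoidance of all the forbidden differences $d, 2d, \ldots, (m-1)d$ by a single dense $X$. What makes this step go through cleanly is the uniformity of the hypothesis $\mathcal{K}(A) > 8S$ across every nonzero shift: it permits a single linear union bound over the $m-1 \le 2S$ shifts while still keeping the total loss below $|A|/4$. No deeper machinery such as Lemma \ref{l:E_shifts} or Theorem \ref{t:E_shifts} appears necessary for this corollary.
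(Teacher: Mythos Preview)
Your proposal is correct and follows essentially the same route as the paper. Both arguments take a sub-progression $Y\subseteq A$ of length about $2\mathcal{S}[A]$, use the uniform bound $|A\cap(A-s)|\le |A|/\mathcal{K}(A)$ to delete a small set from $A$ (your vertex-cover deletion is exactly the paper's set $A_*$ from \eqref{def:A_*} with $B_*=Y$), and conclude that the remaining $X$ has $|X+Y|=|X||Y|$, forcing $\mathcal{S}[A]$ to be large. Your one-sided removal over $j=1,\dots,m-1$ is a mild sharpening of the paper's two-sided union over $P-P$, but the idea is identical; in particular the reference to Lemma~\ref{l:E_shifts} in the paper is only to borrow this same deletion step, not any of the random-choice machinery.
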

\begin{proof}
    Let $k = k(A)$, $P \subseteq A$ be any arithmetic progression of length $l\le k$ 
    and $K=\mathcal{K} (A)$. 
    By  definition, we have $|A\cap (A-s)|\le |A|/K$ for all $s\neq 0$. 
    Repeating the calculations in \eqref{f:|A|/2_B} (the set $A_*$ is defined in \eqref{def:A_*}), we see that 
\[
    \mathcal{S}[A] \ge \frac{|A_*| |P|}{|A|} \ge \frac{l}{2}  \,,
\]
    provided $|P-P| = 2l-1 \le K/2$. 
    Hence either $l \le 2\mathcal{S}[A]$ or $2l-1 > K/2$. 
    Thus either $k(A) \le 2\mathcal{S}[A]$ or one  can take $l=2\mathcal{S}[A]+1$ and then obtain $K\le 8 \mathcal{S}[A]$. 
This completes the proof. 
$\hfill\Box$
\end{proof}

\bp 

It is known that sets with small doubling have small additive dimension, see \cite{Sanders_sh}. 
We show that the same is true for sets 
having 
small quantities $\mathcal{S} [A]$ and $\mathcal{E} [A]$.

\begin{corollary}
    Then 
\[
    \dim (A) \ll \mathcal{S}^2 [A] \log |A| \,,   
    \quad \quad 
    \mbox{and}
    \quad \quad 
    \dim (A) \ll \mathcal{E} [A] \log |A|
\]
\end{corollary}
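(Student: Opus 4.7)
\emph{Plan.} Let $\Lambda\subseteq A$ be a maximum dissociated subset, and set $d:=|\Lambda|=\dim(A)$. I would prove both inequalities by a Sanders--type subset-sum argument, importing the polynomial control on doubling from the main Theorem~\ref{t:equiv}.

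First I would establish the counting lower bound that for each $0\le k\le d$ the $\binom{d}{k}$ subset sums $\sum_{\lambda\in S}\lambda$ with $S\subseteq\Lambda$, $|S|=k$, are pairwise distinct in $\Gr$: if two such sums coincided, the difference $\sum_{\lambda\in S\setminus S'}\lambda-\sum_{\lambda\in S'\setminus S}\lambda=0$ would be a nontrivial $\pm 1$-relation, contradicting dissociation. Since each of these sums lies in $kA$, this gives $\binom{d}{k}\le |kA|$ for every such $k$.

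For the upper bound I would then invoke Theorem~\ref{t:equiv}, which expresses the doubling constant $K:=|A+A|/|A|$ polynomially in $\mathcal{E}[A]$ and $\mathcal{S}[A]$; to match the target exponents, the exponents yielded by that theorem should give $K\ll\mathcal{E}[A]$ and $K\ll\mathcal{S}[A]^{2}$. Combined with the Pl\"unnecke--Ruzsa inequality \eqref{f:Pl-R}, this produces $|kA|\le K^{k}|A|\ll\mathcal{E}[A]^{k}|A|$, and analogously with $\mathcal{S}[A]^{2}$ in place of $\mathcal{E}[A]$.

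Chaining the two bounds yields $\binom{d}{k}\ll\mathcal{E}[A]^{k}|A|$. Using $\binom{d}{k}\ge(d/k)^{k}$ and taking $k$ equal to a small constant multiple of $d/\mathcal{E}[A]$ makes the left-hand side at least $e^{k}$, forcing $k\ll\log|A|$ and hence $d\ll\mathcal{E}[A]\log|A|$. The analogous optimisation with $\mathcal{S}[A]^{2}$ produces the other inequality. The main obstacle is certifying the precise exponents $K\ll\mathcal{E}[A]$ and $K\ll\mathcal{S}[A]^{2}$ from Theorem~\ref{t:equiv}: any slack in those polynomial reductions propagates one-for-one into the final exponent on $\dim(A)$.
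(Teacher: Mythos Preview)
Your approach has a genuine gap precisely at the obstacle you flagged. Theorem~\ref{t:equiv} only yields $\mathcal{D}[A]\ll\mathcal{S}[A]^{C_1}$ and $\mathcal{D}[A]\ll\mathcal{E}_2[A]^{C_2}$ with $C_1=36$ and $C_2=26$ (as stated just after its proof), not $K\ll\mathcal{S}[A]^2$ or $K\ll\mathcal{E}[A]$. Feeding these exponents through your subset-sum/Pl\"unnecke argument would produce $\dim(A)\ll\mathcal{S}[A]^{36}\log|A|$ and $\dim(A)\ll\mathcal{E}[A]^{26}\log|A|$, far weaker than the corollary. There is no way to recover the sharp exponents from that route, because the Balog--Szemer\'edi--Gowers step inside Theorem~\ref{t:equiv} is intrinsically lossy. (There is also a structural issue: this corollary sits in the preliminaries, \emph{before} Theorem~\ref{t:equiv}, so the paper is clearly not expecting you to invoke the main result here.)

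The paper's argument bypasses the doubling constant entirely. It takes a maximum dissociated set $\Lambda\subseteq A$ and bounds the common energy $\E(\Lambda,A)$ from both sides. The lower bounds come directly and losslessly from the definitions: $\E(\Lambda,A)\ge|\Lambda|^2|A|/\mathcal{E}[A]$ by definition of $\mathcal{E}[A]$, and $\E(\Lambda,A)\gg|A||\Lambda|^2/\mathcal{S}[A]^2$ from the first inequality in~\eqref{f:E_S}. The upper bound is the key analytic input you are missing: Rudin's inequality gives $\T_k(\Lambda)\le k^{O(k)}|\Lambda|^k$ for dissociated $\Lambda$, and H\"older then yields $\E(\Lambda,A)\le\T_k(\Lambda)^{1/k}|A|^{1+1/k}\ll k|\Lambda||A|^{1+1/k}$. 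Choosing $k\sim\log|A|$ and comparing the two sides gives $|\Lambda|\ll\mathcal{E}[A]\log|A|$ and $|\Lambda|\ll\mathcal{S}[A]^2\log|A|$ with the correct exponents. The moral: work with $\E(\Lambda,A)$ directly via Rudin rather than detouring through $|kA|$ and the doubling constant.
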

\begin{proof}
    Let $M_1 = \mathcal{E} [A]$ and $M_2 = \mathcal{S} [A]$. 
    By definition we have for all sets $X,Y \subseteq A$ 
\begin{equation}\label{tmp:07.08_1}
    \E(X,Y) \ge \frac{|X|^2 |Y|^2}{M_1 |A|} \,, 
\end{equation}
and for an arbitrary $Z\subseteq A$ 
(thanks to the first formula of \eqref{f:E_S}) 
the following holds 
\begin{equation}\label{tmp:07.08_2}
     \E(A,Z) \gg \frac{|A||Z|^2}{M^2_2} \,.
\end{equation}
    Let us consider inequality \eqref{tmp:07.08_1}, as for \eqref{tmp:07.08_2} the argument is similar. 
    Take a dissociated set $\Lambda \subseteq A$ such that  $|\Lambda| = \dim (A)$ and apply the previous inequalities with $X=Z=\Lambda$ and $Y=A$. Using the H\"older inequality, as well the Rudin  bound (see, e.g., \cite[Section 4]{TV}), namely, $\T_k (\Lambda) \le k^{O(k)} |\Lambda|^k$ for any $k\ge 2$, we obtain (see details in \cite{Sanders_sh}) 
\[
    \frac{|\Lambda|^2 |A|}{M_1} \le \E(\Lambda,A) \le \T_k (\Lambda)^{1/k} |A|^{1+1/k} \ll k |\Lambda| |A|^{1+1/k} 
    \ll |\Lambda| |A| \log |A| 
\]
    as we can choose $k \sim \log |A|$. 
This completes the proof. 
$\hfill\Box$
\end{proof}

\bp

We conclude this section by computing  the value of $\mathcal{S}[A]$  for a random set. 
The argument of the proof of Lemma \ref{l:E_shifts} suggests that this  quantity should not be as large as for Sidon sets, see Example \ref{exm:Sidon}.

\begin{proposition}
    Let $\Gr$ be a finite abelian group, $|\Gr| = N$, where  $N$ be a sufficiently large number. 
    Suppose that $A\subseteq \Gr$ be a random set formed by 
    choosing 
    elements for $A$ from $\Gr$ independently with probability $\d \in (0,1)$, $\d \gg N^{-1/2}$. 
    Then 
\begin{equation}\label{f:random_S}
    \sqrt{\frac{1}{\d}} \ll \mathcal{S}[A] 
    \ll 
    \sqrt{\frac{1}{\d}} \cdot \log^2 |A| \,.
\end{equation}
    If $\Gr = \Z/N\Z$, then, in addition, 
\begin{equation}\label{f:random_S_2}
    \mathcal{S}[A]  \gg \min \left\{ \frac{1}{\d}, \frac{\log |A|}{\log (1/\d)} \right\} \,.
\end{equation}
\label{p:random_S}
\end{proposition}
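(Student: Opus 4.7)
The plan is to handle the two lower bounds via the machinery already set up in the paper, and to attack the upper bound by a first-moment / union-bound argument; the hardest step is the counting of Sidon pairs needed for the upper bound.

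For the lower bound $\sqrt{1/\d}\ll \mathcal{S}[A]$, I would first compute the typical common additive energy. Splitting
\[
    \E(A,A)=|A|^2+\sum_{s\ne 0}r_{A-A}(s)^2
\]
and applying Chernoff concentration to $|A|$ and to each $r_{A-A}(s)=|A\cap(A-s)|$ (the hypothesis $\d\gg N^{-1/2}$ makes the mean $\d^2 N$ dominate the fluctuation $\sqrt{\d^2 N \log N}$, uniformly in $s\ne 0$ after a union bound), one obtains $|A|\sim \d N$ and $r_{A-A}(s)\sim \d^2 N$ uniformly for $s\ne 0$ with high probability. Hence $\E(A,A)\sim \d^4 N^3$, and writing $K:=|A|^3/\E(A,A)\sim 1/\d$, the first inequality of (\ref{f:E_S}) yields $\mathcal{S}_1^2[A]\gg K\sim 1/\d$, and so $\mathcal{S}[A]\ge \mathcal{S}_1[A]\gg 1/\sqrt{\d}$.

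For the upper bound $\mathcal{S}[A]\ll \sqrt{1/\d}\log^2|A|$, set $T:=C\sqrt{1/\d}\log^2|A|$ for a large absolute constant $C$. For each pair of sizes $(m_1,m_2)$ with $m_1 m_2>T|A|$, I would bound the probability that $A$ contains a Sidon pair $(X,Y)$ of shape $(m_1,m_2)$ by its expectation
\[
    \sum_{(X,Y)\text{ Sidon in }\Gr}\d^{|X\cup Y|}\le N(m_1,m_2)\cdot \d^{m_1+m_2-1},
\]
where $N(m_1,m_2)$ counts Sidon pairs in $\Gr$ of these sizes. The Sidon condition $|X+Y|=|X||Y|$ is equivalent to $(X-X)\cap(Y-Y)=\{0\}$, so for fixed $Y$ the set $X$ is an independent set in the Cayley graph on $\Gr$ generated by $Y-Y\setminus\{0\}$; this structural constraint, combined with the admissible range $m_1 m_2\le |A+A|\sim N$, should improve the naive bound $\binom{N}{m_1}\binom{N}{m_2}$ enough to make each term small, and a union bound across the $O(\log^2|A|)$ dyadic size classes (the source of the $\log^2|A|$ factor in the statement) then pushes the total probability below one. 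The main obstacle is exactly this counting step: the crude bound alone only recovers $\mathcal{S}[A]\ll 1/\d$ (which is already implicit in $|X+Y|\le N$), so the extra factor $\sqrt{\d}$ has to be extracted by genuinely exploiting the scarcity of Sidon pairs, either through a sharp count of independent sets in the above Cayley graph, or via a Poisson-style tail estimate for nontrivial additive collisions in a uniformly random pair of sets.

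For the sharper lower bound when $\Gr=\Z/N\Z$, the plan is to invoke the corollary proved just before the proposition, $\mathcal{S}[A]\gg \min(k(A),\mathcal{K}(A))$. The same Chernoff concentration of $r_{A-A}$ gives $\mathcal{K}(A)=|A|/\min_{s\ne 0}r_{A-A}(s)\sim 1/\d$. For $k(A)$, the expected number of length-$\ell$ arithmetic progressions in $A\subseteq \Z/N\Z$ is of order $N^2\d^{\ell}$, which exceeds $1$ precisely when $\ell\le \log|A|/\log(1/\d)$ up to constants; a matching second-moment argument then produces $k(A)\gg \log|A|/\log(1/\d)$ with high probability. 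Combining yields the advertised $\mathcal{S}[A]\gg \min(1/\d,\log|A|/\log(1/\d))$.
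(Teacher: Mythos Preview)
Your two lower bounds are fine and close to the paper's. For $\mathcal{S}[A]\gg \d^{-1/2}$, your Chernoff control of each $r_{A-A}(s)$ is a legitimate variant of the paper's balanced--function computation of $\E(A)$; both feed into the same inequality \eqref{f:E_S}. For the $\Z/N\Z$ bound, invoking the corollary $\mathcal{S}[A]\gg \min(k(A),\mathcal{K}(A))$ is correct; the paper instead applies the construction from Lemma~\ref{l:E_shifts} directly with $B$ equal to the long arithmetic progression found in $A$, but that corollary was itself proved from Lemma~\ref{l:E_shifts}, so the two routes are essentially the same.

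The genuine gap is the upper bound. Your union--bound plan is, as you yourself flag, missing the counting input, and there is no indication that any reasonable bound on $N(m_1,m_2)$ recovers the extra factor $\sqrt{\d}$ beyond the trivial $kl\le N$. The paper does \emph{not} count Sidon pairs in $\Gr$ at all. It imports from \cite{DSSS_co-Sidon} (the same source as Theorem~\ref{t:E_shifts}) the following \emph{asymmetric} fact: with probability $1-o(1)$, every Sidon pair $X,Y\subseteq A$ with $|X|=k\ge 2|Y|=2l$ satisfies
\[
    k l^2 \ll N\log^2|A| \,.
\]
This is then bootstrapped against the lower bound already obtained. If $\mathcal{S}[A]$ is attained at such $(X,Y)$, then $kl/|A|=\mathcal{S}[A]\gg \d^{-1/2}$, whence $kl\gg \d^{1/2}N$; dividing this into $kl^2\ll N\log^2|A|$ gives $l\ll \d^{-1/2}\log^2|A|$, and since $k\le |A|$ one concludes $\mathcal{S}[A]=kl/|A|\le l\ll \d^{-1/2}\log^2|A|$. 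The whole argument hinges on the exponent pattern $kl^2$ (rather than $kl$), which is what allows the bootstrap to close; your approach via a direct Sidon--pair count does not access this asymmetry and, as written, does not give a proof.
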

\begin{proof}
    Let $L=\log |A|$. 
    We follow the argument of the proof of \cite[Lemma 4.3, Theorem 1.5]{DSSS_co-Sidon}. 
    By the Chernoff's inequality we have $||A| - \d N| \le 4\sqrt{|A|}$ with probability at least $3/4$, say.
    Further, writing $f(x) = A(x) - \d$, $\mathbb{E} f = 0$, we see that 
\[
    \E(A) = \frac{|A|^4}{N} + \E(f) - \frac{(|A|-\d N)^4}{N} 
    = 
    \frac{|A|^4}{N} + \E(f) + \mathcal{E} \,, 
\]
where $|\mathcal{E}| \le 2^8 |A|^2 N^{-1}$. 
Hence 
\[
     \mathbb{E} \E(A) = \d^4 N^3 + \mathbb{E} 
     \sum_{x+y=z+w~:~ |\{x,y,z,w \}|\le 2} 
     f(x)f(y)f(z)f(w) + \mathcal{E} = \d^4 N^3 +  \mathbb{E} Z + \mathcal{E} \,,
\]
    where $\mathbb{E} Z \le 3|A|^2 \le 6 \d^2 N^2$. 
    Similarly, as $|f(x)|^2 \le A(x) + \d^2$, we have 
\[
    \mathbb{E} Z^2 \le 3\mathbb{E} \sum_{x_1,\dots,x_4} (A(x_1) + \d^2) \dots (A(x_4) + \d^2) \le 48 |A|^4 \,.
\]
Thus with probability at least $1/2$ we have $|Z| \le 10 \d^2 N^2$. 
In particular, one has $\E(A) \ll \d^4 N^3 + \d^2 N^2$ and by Theorem \ref{t:E_shifts} 
(see the first estimate in \eqref{f:E_S}) 
the following holds 
$\mathcal{S}[A] \gg \sqrt{\frac{1}{\d}}$.
Now to obtain the upper bound from \eqref{f:random_S}  we apply the argument of the proof of \cite[Lemma 4.3]{DSSS_co-Sidon} and derive that 
\[
\max_{x\neq 0}\, r_{A-A}(x) \le 2\d^2 N + 3\log N\]
with probability greater than $9/10$ under the assumption $\d \ge N^{-2/3}$. 
Finally, repeating the proof of \cite[Theorem 1.5]{DSSS_co-Sidon} and using the obtained bound $|Z| \le 10 \d^2 N^2$, we see that with probability $1-o(1)$ one has 
\begin{equation}\label{f:kl^2}
    k l^2 \ll L^2 N  
\end{equation}
    for any $X,Y\subseteq A$, $|X|=k$, $|Y|=l$, $k\ge 2l$ and $|X+Y| = |X||Y|$.  
    Now suppose that $\mathcal{S}[A]$  is attained at $X,Y\subseteq A$ with $|X+Y| = |X||Y|$, $|X|=k$, $|Y|=l$ and such that  $k\ge 2l$ (taking a half of $X$ or $Y$ we can always assume that the last condition holds). 
    Combining the lower bound \eqref{f:random_S} and estimate \eqref{f:kl^2}, we get 
\[
    \d^{-1/2} \ll \mathcal{S}[A] \ll  \frac{kl}{|A|} \ll \frac{L^2}{\d l}
\]
and hence $l \ll L^2 \d^{-1/2}$. 
But then 
\[
    \mathcal{S}[A] \ll \frac{kl}{|A|} \le l \ll L^2 \d^{-1/2}
\]
as required.

To get \eqref{f:random_S_2}, simply  find an arithmetic progression $P\subseteq A$ of size $\Omega(L/ \log(1/\d))$ (it is possible to do with high probability). Also, we can assume that $\d \gg \sqrt{L/N}$ (otherwise estimate \eqref{f:random_S_2} is trivial) and hence 
$$
    \max_{x\neq 0}\, r_{A-A}(x) \le 2\d^2 N + 3\log N \ll \d^2 N \,.
$$
Using the argument of the proof of Lemma \ref{l:E_shifts} with $B=P$, we see that $\mathcal{S}[A] \gg \d^{-1}$, provided $|B| \ll \d^{-1}$. 
This completes the proof. 
$\hfill\Box$
\end{proof}


\section{The proof of the main result}
\label{sec:proof}

Now we are ready to prove our main result which shows that all quantities $\mathcal{S} [A]$, $\mathcal{E} [A]$ and $\mathcal{D} [A]$ are polynomial equivalent. 
Additionally, the estimate \eqref{f:equiv_n,m} below says that the size of the maximum direct sum in $A$ and $nA$ are somewhat comparable.

\begin{theorem}
    Let $\Gr$ be an abelian group, and $A,B \subseteq \Gr$ be sets. 
    Then for any $T\ge 1$ the following holds 
\begin{equation}\label{f:equiv}
    1\le \mathcal{S}_T [A;B]
    \le \mathcal{E}_T [A;B] \le \mathcal{D}[A;B], 
    \quad \quad 
    \mathcal{E}_T [A;B] \ll T \mathcal{S}^2_T [A;B]
    \,,
\end{equation}
    and for all $T\ge 2$ there are positive constants $C_1,C_2>0$ such that 
\begin{equation}\label{f:equiv_sym}
    \mathcal{D}[A] \ll \mathcal{S}^{C_1} [A] \,,
    \quad \quad 
    \mbox{and}
    \quad \quad 
    \mathcal{D}[A] \ll \mathcal{E}^{C_2}_T [A] \,.
\end{equation}
    In particular, for any non--zero integers $n$ and $m$ one has 
\begin{equation}\label{f:equiv_n,m}
    \mathcal{S}[nA;mA] \le \mathcal{E}[nA;mA] \le \mathcal{D}[nA;mA] \ll 
    \mathcal{S}^{C_1 (|n| + |m|)} [A] 
    \,.
\end{equation}  
\label{t:equiv}
\end{theorem}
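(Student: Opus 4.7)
The proof splits naturally into three parts, following the three assertions \eqref{f:equiv}, \eqref{f:equiv_sym}, \eqref{f:equiv_n,m}.

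First, for the chain of inequalities \eqref{f:equiv}, I would dispatch each link in order. The lower bound $\mathcal{S}_T[A;B]\ge 1$ is realized by taking $X=A$ (valid for any $T\ge 1$) and $Y=\{b\}$ for some $b\in B$. The bound $\mathcal{S}_T\le \mathcal{E}_T$ follows because $|X+Y|=|X||Y|$ forces every relation $a_1-b_1=a_2-b_2$ to be trivial, hence $\E(X,Y)=|X||Y|$, so the ratio in the definition of $\mathcal{E}_T$ simplifies to $|X||Y|/|A|$. The inequality $\mathcal{E}_T\le \mathcal{D}$ is Cauchy--Schwarz, $\E(X,Y)|X+Y|\ge |X|^2|Y|^2$, together with $X+Y\subseteq A+B$. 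For the last bound $\mathcal{E}_T\ll T\mathcal{S}_T^2$, I would take the pair $(X,Y)$ attaining $\mathcal{E}_T[A;B]$ and apply the first estimate in \eqref{f:E_S}: this yields $Y'\subseteq Y$ with $|X+Y'|=|X||Y'|$ and $|Y'|\gg \sqrt{|A|\mathcal{E}_T[A;B]/|X|}$, and since $|X|\ge |A|/T$, the pair $(X,Y')$ witnesses $\mathcal{S}_T[A;B]\gg \sqrt{\mathcal{E}_T[A;B]/T}$.

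Next comes \eqref{f:equiv_sym}. By \eqref{f:equiv} it is enough to prove $\mathcal{D}[A]\ll \mathcal{E}_T[A]^{C_2}$ for some $T\ge 2$. Setting $M=\mathcal{E}_T[A]$, the hypothesis specialized to $X=Y=A$ gives $\E(A)\ge |A|^3/M$, so Theorem \ref{t:BSzG_intr} (Balog--Szemer\'edi--Gowers) produces $A_0\subseteq A$ with $|A_0|\gg |A|/M^{O(1)}$ and $|A_0+A_0|\ll M^{O(1)}|A_0|$. The crucial idea is to upgrade this from $A_0$ to $A$ by using that the hypothesis is uniform over \emph{all} large $X$: taking $X=A$ and $Y=A_0$ yields $\E(A,A_0)\ge |A||A_0|^2/M$, so an asymmetric BSzG gives $A_1\subseteq A$ and $A_0^{*}\subseteq A_0$ with $|A_1|$, $|A_0^{*}|$ comparable to $|A|$, $|A_0|$ and $|A_1+A_0^{*}|$ polynomially bounded in $M$. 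Combining with the Pl\"unnecke--Ruzsa inequality \eqref{f:Pl-R} applied to $A_0$ (hence to $A_0^{*}$) and Ruzsa's triangle inequality \eqref{f:triangle_intr} pushes this to $|A_1+A_1|\ll M^{O(1)}|A_1|$; if $|A_1|\ge |A|/2$ a Ruzsa covering argument then gives $|A+A|\ll M^{O(1)}|A|$, and otherwise the hypothesis is reapplied to $A\setminus A_1$ and the process iterated.

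Finally, \eqref{f:equiv_n,m} is a consequence of \eqref{f:equiv_sym} and Pl\"unnecke--Ruzsa: with $K=\mathcal{D}[A]$ one has $|nA+mA|\le K^{|n|+|m|}|A|$ by \eqref{f:Pl-R}, while the injection $a\mapsto a+(|n|-1)a_0$ (for any fixed $a_0\in A$) shows $|nA|\ge |A|$, whence $\mathcal{D}[nA;mA]\le K^{|n|+|m|}\ll \mathcal{S}[A]^{C_1(|n|+|m|)}$. The main obstacle is the transfer step in the proof of \eqref{f:equiv_sym}: a single application of BSzG only produces a small-doubling subset of density $1/M^{O(1)}$, and ascending from there to a bound on $|A+A|$ demands the full uniformity of the hypothesis over every $X\subseteq A$ of density at least $1/T$. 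The sharpness of the density threshold $1/2$ announced in Remark \ref{r:1/2-kappa} strongly suggests that this uniform control cannot be essentially relaxed.
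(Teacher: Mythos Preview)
Your treatment of \eqref{f:equiv} and \eqref{f:equiv_n,m} is correct and essentially the paper's. The reduction of \eqref{f:equiv_sym} to the single estimate $\mathcal{D}[A]\ll\mathcal{E}_T[A]^{C_2}$ is also legitimate. The gap is in your argument for that estimate.

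The mechanism you are missing is this. After Balog--Szemer\'edi--Gowers produces $A_1\subseteq A$ with $|A_1-A_1|\ll M_*^{O(1)}|A_1|$, the paper takes $Y\subseteq A$ \emph{maximal} subject to the sum $A_1+Y$ being direct; maximality forces $A\subseteq A_1-A_1+Y$, whence $|A+A|\le|2A_1-2A_1|\,|Y|^2$ by Pl\"unnecke. The crucial step --- not supplied by ``a Ruzsa covering argument'' --- is that $|Y|$ is bounded \emph{by the hypothesis itself}: since $A_1+Y$ is direct one has $\E(A_1,Y)=|A_1||Y|$, and if $|A_1|\ge|A|/2$ the definition of $\mathcal{E}_2[A]$ gives $|A_1||Y|\ge|A_1|^2|Y|^2/(M_*|A|)$, i.e.\ $|Y|\le 2M_*$. (For the $\mathcal{S}[A]$ bound the paper does this even more directly: $|A_1||Y|\le\mathcal{S}[A]\,|A|$ with \emph{no} size restriction on $A_1$, so there is no iteration at all in that case.) Your asymmetric BSzG detour is unnecessary, and even granting it you never say how the covering set is controlled.

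Your iteration proposal (``reapply the hypothesis to $A\setminus A_1$ and iterate'') would not work as written: repeated BSzG yields disjoint small-doubling pieces $A_1,A_1',A_1'',\dots$ with no control on their mutual sumsets, so their union need not have small doubling. The paper's iteration is different and much simpler: from $\E(A_j^c,A_1)\ge|A_j^c||A_1|^2/(2M_*)$ (valid because $|A_j^c|\ge|A|/2$) one extracts a single translate $A_1+x_j$ meeting $A_j^c$ in at least $|A_1|/(2M_*)$ points, adjoins it to $A_j$, and repeats. After $O(M_*^2)$ steps this yields $A'\subseteq A\cap(A_1+Z)$ with $|A'|\ge|A|/2$ and $|Z|\ll M_*^2$, so $A'-A'\subseteq A_1-A_1+Z-Z$ is automatically small, and the maximal-direct-sum argument above (now applied with $A'$ in place of $A_1$) finishes.
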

\begin{proof}
    Let $X=A$ and let $Y \subseteq B$ be any  set such that $|Y| =1$. Then we see that $\mathcal{S}_T [A;B]\ge 1$. 
    Now let us show that $\mathcal{S}_T [A;B] \le \mathcal{E}_T [A;B]$.
    Indeed, find $X\subseteq A$, $|X|\ge |A|/T$ and $Y \subseteq B$ such that $|X+Y|= |X||Y|$ and $\mathcal{S}_T [A;B] = \frac{|X||Y|}{|A|}$.
    Then 
\[
    \mathcal{E}_T [A;B] \ge \frac{|X|^2 |Y|^2}{|A| \E(X,Y)} = \frac{|X||Y|}{|A|} = \mathcal{S}_T [A;B]
\]
    as required. 
    Further, by the Cauchy--Schwarz inequality \eqref{f:energy_CS}, one has  $\E(X,Y) \ge \frac{|X|^2 |Y|^2}{|A+B|}$ and hence for any $T\ge 1$ the following holds $\mathcal{E}_T [A;B] \le \mathcal{D}[A;B]$.
    Finally, by 
    formulae in \eqref{f:S_decrease}, \eqref{f:E_S}, we have (recall that $|X|\ge |A|/T$)
\[
    \mathcal{E}_T [A;B] = \frac{|X|^2 |Y|^2}{|A| \E(X,Y)}
    \ll \frac{|X|}{|A|} \cdot \mathcal{S}^2_1 [X;Y] 
    \le 
    \frac{|X|}{|A|} \cdot \mathcal{S}^2_1 [X;B] 
    \le 
    T \mathcal{S}^2_T [A;B] \,.
\]
    In particular, for any fixed $T$ the quantities $\mathcal{E}_T [A;B]$ and $\mathcal{S}_T [A;B]$ are polynomial equivalent.

    Now let $M=\mathcal{S}[A;B]$ and $\E(A,B) = |A| |B|^2/K$. 
    Applying the first estimate of \eqref{f:E_S} 
    we see that 
\begin{equation}\label{f:M_K_connection}
    \E(A,B) = \frac{|A| |B|^2}{K} \gg \frac{|A| |B|^2}{M^2} \,,
\end{equation}
    and therefore  $K \ll M^2$. 
    Hence the common additive energy $\E(A,B)$ is large.
    Recall that in our case $A=B$  and we need to show that 
    $\mathcal{D}[A] \ll \mathcal{S}^C [A]$.
    Using a version 
    of the Balog--Szemer\'edi--Gowers Theorem \ref{t:BSzG_intr}, we find $A'\subseteq A$, $|A'| \gg |A|/K$ such that $|A'-A'| \ll K^C |A'|$, where $C>0$ is an absolute constant (one can take $C=4$, see, e.g., \cite{Schoen_BSzG}). 
    Consider a maximal set $Y \subseteq A$ such that the sum $A'+Y$ is disjoint. Then, by maximality, one has $A\subseteq A'-A'+Y$ and therefore by the Pl\"unnecke inequality \eqref{f:Pl-R} (also, see \cite{Petridis}) one has 
\begin{equation}\label{f:A+A_Y}
    |A+A| \le |2A'-2A'+Y+Y| \le |2A'-2A'| |Y|^2  \ll K^{4C} |Y|^2 |A'|
    \ll
    M^{8C}  |Y|^2 |A'| \,.
\end{equation}
    It remains to estimate $|Y|$. 
    But using  the definition of the quantity $\mathcal{S} [A]$ one more time, we get 
$
    M |A| \ge |A'| |Y|  
$ 
    and thus 
\[
    \mathcal{D}[A] |A|^2 M^{-2} \ll |A+A| |A'| \ll M^{8C+2} |A|^2  
\]
    as required.
    Finally, the first estimate in \eqref{f:equiv_sym} immediately implies \eqref{f:equiv_n,m}. 
    Indeed, by the Pl\"unnecke inequality \eqref{f:Pl-R}, we obtain 
\[
    \mathcal{S}[nA;mA] \le \mathcal{D}[nA;mA] \le \mathcal{D}^{|n|+|m|}[A] \ll \mathcal{S}^{C_1 (|n|+|m|)}[A] \,.
\]

    Now let us obtain the second bound from \eqref{f:equiv_sym}, 
    and it suffices to consider only the case $T=2$.
    Let $\mathcal{E}_2 [A] = M_*$. 
    Then $\E(A) \ge |A|^3/M_*$ and applying the Balog--Szemer\'edi--Gowers Theorem again, we find $A_1 \subseteq A$, $|A_1| \gg |A|/M_*$ such that $|A_1-A_1| \ll M^C_* |A_1|$, where $C>0$. 
    If $|A_1|\ge |A|/2$, then we can use the argument as above to find  a maximal set $X\subseteq A$ such that the sum $A_1+X$ is direct, hence  $A\subseteq A_1-A_1+X$ and we obtain an analogue of \eqref{f:A+A_Y} with $K=M_*$.
    Also, by definition of the quantity $\mathcal{E}_2 [A]$, we have 
\[
    |A_1||X| = \E(A_1,X) \ge \frac{|A_1|^2 |X|^2}{M_* |A|} \ge \frac{|A_1||X|^2}{2M_*} \,,
\]
    and hence $|X| \le 2M_*$. Thus  
    $\mathcal{D}[A] \ll M^{4C+2}_*$. 
    Now if $|A_1|\le |A|/2$, then we take $A^c_1 = A\setminus A_1$, $|A^c_1|\ge |A|/2$ and by definition of the quantity $\mathcal{E}_2 [A]$, one has 
\begin{equation}\label{tmp:7.08_3}
    \sum_{x} |A^c_1 \cap (A_1+x)|^2 = \E(A^c_1,A_1) \ge \frac{|A^c_1||A_1|^2}{2M_*} \,.
\end{equation}
    Hence there is $x_1\in \Gr$ such that $|A^c_1 \cap (A_1+x_1)| \ge |A_1|/(2M_*) \gg |A|/M^2_*$. 
    Put $A_2 = A_1 \bigsqcup (A^c_1 \cap (A_1+x_1))$ and then consider $A^c_2 = A\setminus A_2$. 
    If $|A_2| \ge |A|/2$, then we stop our algorithm. 
    If not, then we have an analogue of \eqref{tmp:7.08_3}, namely, 
\[
    \sum_{x} |A^c_2 \cap (A_1+x)|^2 = \E(A^c_2,A_1) \ge \frac{|A^c_2||A_1|^2}{2M_*} \,,
\]
    and therefore exists $x_2\in \Gr$ such that $|A^c_2 \cap (A_1+x_2)| \ge |A_1|/M_* \gg |A|/M^2_*$. 
    Put $A_3 = A_2 \bigsqcup (A^c_2 \cap (A_1+x_2))$ and consider $A^c_3 = A\setminus A_3$. 
    And so on. Clearly, our algorithm must stop after 
    no more than 
    $s=O(M^2_*)$ 
    steps. 
    At the end we have a set $A' \subseteq A\cap (A_1 + Z)$, $|Z| \le s$, $|A'|\ge |A|/2$. 
    Applying the argument as above, we find $W\subseteq A$, $|W| \le 2M_*$ and such that  $A \subseteq A'-A'+W$. 
    It gives us 
\[
    |A+A| \le |2A'-2A'+2W|\le |2A_1-2A_1+2W+2Z-2Z| \le |2A_1-2A_1| |Z|^4 |W|^2 \ll M^{4C+10}_* |A_1| \,,
\]
    and hence  $\mathcal{D}[A] \ll M^{4C+10}_*$. 
This completes the proof. 
$\hfill\Box$
\end{proof}

\bp 

As one can see from the proof it is possible to take $C_1= 36$ and $C_2=26$ in \eqref{f:equiv_sym}.
Now let us make a series of remarks concerning possible generalizations and limitations of Theorem \ref{t:equiv}.



\begin{remark}
It is easy to see that 
the quantity 
\[
    R_A [A] 
    = \min_{\emptyset \neq X \subseteq A} \frac{|B+X|}{|X|} 
\]
appeared in the modern proof of the Pl\"unnecke inequality \eqref{f:Pl-R} (see \cite{Petridis}) 
is polynomial equivalent to the doubling constant $\mathcal{D} [A]$  and hence  $\mathcal{S} [A]$. 
Indeed, clearly,  
$R_A [A] \le \mathcal{D} [A]$ 
but from the Petridis lemma \cite{Petridis} one has 
\[
    \mathcal{D} [A] |A| = |A+A| \le |A+A+X| \le R^2_A [A] |A|  \,,
\]
and thus $\mathcal{D} [A] \le R^2_A [A]$. 
\end{remark}

\begin{remark}
    It is easy to see that quantities   
$\mathcal{S}[A;B], \mathcal{E}[A;B]$ and $\mathcal{D}[A;B]$ are incomparable. 
Indeed, let $\Gr = \F_2^n$, $k$ be a large integer parameter, $A = \bigsqcup_{j=1}^k H_j$, where $H_j \le \Gr$ are mutually additively disjoint, having the same size $h$,  and $B= \bigsqcup_{j=1}^k S_j$, where $S_j \subseteq H_j$ are some sets, $|S_j| \sim \sqrt{|H_j|}$, say. 
Then $|A| = hk$, $|B| \sim k \sqrt{h}$ and 
$\mathcal{D}[A;B]$ is $\Omega(hk)$ but 
$\mathcal{S}[A;B] \ll |B|^2/|A| = k$ is incomparable with $hk$. A similar calculation shows 
(or just see the next remark) 
that $\mathcal{E}[A;B] \sim k^2$.

Nevertheless,  it is easy to see that any two sets  $A, B \subseteq \Gr$, $L:= |A|/|B| \ge 1$ contain pairs of large subsets $(H_j,S_j)$, $j\in [k]$ such that $\mathcal{D}[H_j;S_j]$ and $\mathcal{S}[H_j;S_j]$ are polynomial  comparable. Here we give a sketch of the proof. Let $M= \mathcal{S}[A;B]$. Then as in \eqref{f:M_K_connection} one derives $\E(A,B)\ge |A||B|^2/M^2$. After that apply the asymmetric Balog--Szemer\'edi--Gowers theorem \cite[Theorem 2.35]{TV} and find a subset $B'= B\cap H$, as well as sets $H,\Lambda \subseteq \Gr$ such that $|B'|\gg M^{-O_\eps (1)} L^{-\eps} |B|$, $|H+H|\ll M^{O_\eps (1)} L^{\eps} |H|$ and $A' = A\cap (H\dotplus \Lambda)$, $|A'|\gg M^{-O_\eps (1)} L^{-\eps} |A|$, 
as well as 
$|\Lambda| |H| \ll M^{O_\eps (1)} L^{\eps} |A|$. 
Then it is easy to see that $\mathcal{D}[A';B'] \sim \mathcal{S}[A';B'] \sim M^{O_\eps (1)} L^{\eps}$. 
\end{remark}

\begin{remark}
    Theorem \ref{t:equiv} says, in particular, 
    that $\mathcal{E}_{2} [A]$ and $\mathcal{D} [A]$ are polynomial dependent. 
    Nevertheless, one can show that 
    the quantities  $\mathcal{E}_1 [A]$ and $\mathcal{D} [A]$ are incomparable.
    Indeed, let $\Gr = \F_2^n$, $k$ be a large integer parameter, $A = \bigsqcup_{j=1}^k H_j$, where $H_j \le \Gr$ are mutually additively disjoint and having the same size $h$. Then $\mathcal{D}[A] \sim |A|$ 
    and for an arbitrary $X\subseteq A$, $X = \bigsqcup_{j=1}^k (X\cap H_j) = \bigsqcup_{j=1}^k X_j$ one has 
\[
    \E(A,X) \ge \sum_{j=1}^k \E(A_j,X_j) \ge h^{} \sum_{j=1}^k |X_j|^2 \ge k^{-1} h |X|^2  = k^{-2} |A| |X|^2 \,, 
\]
    and hence $\mathcal{E}_1 [A] \le k^2$. 
\end{remark}



Once again, 
the second formula of \eqref{f:equiv_sym} from  Theorem \ref{t:equiv} says that $\mathcal{E}^{}_{2} [A]$ and  $\mathcal{D}^{} [A]$ are equivalent. 
We 
now show that in some sense similar results can be obtained for $\mathcal{S}^{}_{2+o(1)} [A]$.

\begin{corollary}
     Let $\Gr$ be an abelian group, and $A \subseteq \Gr$ be a set. Suppose that $|A+A|=K|A|$ and $\kappa \in (0,1/2)$ is a real number.  
     Then there is an absolute constant $c>0$ and sets $X,Y \subseteq A$ such that 
\[
    |X| \ge (1/2-\kappa) |A|,  \quad |X+Y| = |X| |Y| \quad \mbox{and} \quad |X||Y| \gg \kappa^{1/4} 
    K^c |A| \,.
\]
\label{c:0.5}
\end{corollary}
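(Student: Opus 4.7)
The plan is to combine the main Theorem~\ref{t:equiv} with Lemma~\ref{l:E_shifts}. By the polynomial equivalence $\mathcal{D}[A]\ll \mathcal{E}_{2}^{C_2}[A]$ proved in Theorem~\ref{t:equiv}, setting $M:=\mathcal{E}_2[A]$ one has $M\gg K^{1/C_2}$. Unpacking the definition of $\mathcal{E}_2[A]$ produces $X\subseteq A$ with $|X|\ge |A|/2$ and $Y\subseteq A$ such that
\[
\E(X,Y)\le \frac{|X|^2|Y|^2}{|A|\,M}\,;
\]
so the common energy of $X$ and $Y$ is small, which is exactly the hypothesis one wants to feed into Lemma~\ref{l:E_shifts}.

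Next I would apply Lemma~\ref{l:E_shifts} to the pair $(X,Y)$. Writing $\E(X,Y)=|X||Y|^2/K_1$ with $K_1:=|X||Y|^2/\E(X,Y)$, the smallness of $\E(X,Y)$ gives $K_1\ge |A|M/|X|\ge M$, while the trivial bound $\E(X,Y)\ge |X||Y|$ gives $K_1\le |Y|$, so the required condition $K_1\le |Y|$ from the end of the proof of Lemma~\ref{l:E_shifts} is automatic. Applying the lemma with parameter $2\kappa$ (which lies in $(0,1)$ precisely because of the standing assumption $\kappa<1/2$) produces $X_*\subseteq X$ and $Y_*\subseteq Y$ with $X_*+Y_*$ direct and
\[
|X_*|>(1-2\kappa)|X|\ge (1/2-\kappa)|A|,\qquad |Y_*|\gg (2\kappa K_1)^{1/4}\gg \kappa^{1/4}\,K^{1/(4C_2)}.
\]
Multiplying the two estimates yields $|X_*||Y_*|\gg \kappa^{1/4}K^c|A|$ with $c:=1/(4C_2)$, after absorbing the factor $(1/2-\kappa)$ into the implicit constant, as is legitimate while $\kappa$ stays bounded away from $1/2$.

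The main obstacle, in my view, is the interplay of the constants rather than any deeper combinatorics. The threshold $T=2$ in $\mathcal{E}_T[A]$ is what secures the bound $|X|\ge |A|/2$; one must then feed the parameter $2\kappa$ (not $\kappa$) into Lemma~\ref{l:E_shifts}, so that its loss $(1-2\kappa)|X|$ produces precisely $(1/2-\kappa)|A|$ after combining with $|X|\ge|A|/2$. The $1/4$ exponent on $\kappa$ is inherited directly from Lemma~\ref{l:E_shifts}, and the exponent $c$ on $K$ is controlled by the polynomial equivalence constant $C_2$ supplied by Theorem~\ref{t:equiv}. Beyond this calibration, no new ideas appear to be required.
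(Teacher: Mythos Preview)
Your proof is correct and follows exactly the same route as the paper: invoke the polynomial equivalence $\mathcal{D}[A]\ll \mathcal{E}_2^{C_2}[A]$ from Theorem~\ref{t:equiv} to extract witnessing sets $X,Y$ with $|X|\ge |A|/2$ and small common energy, then feed them into Lemma~\ref{l:E_shifts} with parameter $2\kappa$. The only loose end you flag yourself --- absorbing the factor $(1/2-\kappa)$ as $\kappa\to 1/2$ --- is harmless, since for $\kappa\ge 1/4$ one may simply apply the already-proved case $\kappa'=1/4$ (which gives $|X|\ge |A|/4\ge (1/2-\kappa)|A|$ and $|X||Y|\gg K^c|A|\gg \kappa^{1/4}K^c|A|$); the paper's sketch is equally silent on this point.
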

\begin{proof}
    By Theorem \ref{t:equiv} we know that $K:=\mathcal{D}[A]$ and $M:=\mathcal{E}^{}_{2} [A]$ are polynomial equivalent, that is $K\ll M^C$, where $C>0$ is an absolute constant.
    It means that $\E(X_*,Y_*) \gg \frac{|X_*||Y_*|^2}{M}$ for some $X_*,Y_* \subseteq A$, $|X_*|\ge |A|/2$. 
    Using the argument of the proof of Lemma \ref{l:E_shifts}, we find some sets $X\subseteq X_*$, $Y \subseteq Y_*$ such that $|X+Y| = |X| |Y|$, $|X| \ge (1/2-\kappa) |A|$ and $|Y|^4 \sim \kappa M$
    as required. 
    Another possible way to obtain the same is to repeat the exhaustion argument of the proof of Theorem \ref{t:equiv}. 
    This completes the proof. 
$\hfill\Box$
\end{proof}

\begin{remark}
\label{r:1/2-kappa}
    It is easy to see that it is impossible to replace $1/2-\kappa$ to  $1/2+\kappa$ in Corollary \ref{c:0.5}.
    Indeed, let $\Gr = \F_2^n$,  $A = H_1 \bigsqcup H_2$, where $H_1, H_2 \le \Gr$ are mutually additively disjoint and having the same size $|A|/2$. 
    Then $\mathcal{D}[A] \sim |A|$ but if $X\subseteq A$, $|X| \ge (1/2+\kappa)|A|$ then it is easy to see that any set $Y\subseteq A$ such that $|X+Y| = |X||Y|$ satisfies $|Y| \ll \kappa^{-1}$. 
    It implies that  $\mathcal{S}_{(1/2+\kappa)^{-1}}[A] \ll \kappa^{-1}$ 
    (and, therefore, according to the formula \eqref{f:equiv} automatically $\mathcal{E}_{(1/2+\kappa)^{-1}}[A] \ll \kappa^{-2}$) 
    and thus this quantity is incomparable with $\mathcal{D}[A]$. 
\end{remark}


Let $A,B \subseteq \Gr$ be sets. 
Suppose that the quantity $\mathcal{S} [A;B]$ is attained at $X\subseteq A$, $Y\subseteq B$, that is $\mathcal{S} [A;B] = |X||Y|/|A|$, where $|X+Y|=|X||Y|$.
Then by the maximality one has 
\[
    B \subseteq X-X+Y
    \quad \quad 
    \mbox{and}
    \quad \quad 
    A \subseteq Y-Y+X \,.
\]
Thus, an analogue of the covering lemma (see, e.g.,  \cite[Section 2.4]{TV}) takes place (recall that the sizes of $X,Y$ are controllable, namely, $|X||Y| = |A| \mathcal{S} [A;B]$).  
Similarly, 
we show 
that an  analogue of the triangle inequality \eqref{f:triangle_intr} holds. 
Surprisingly, but our bounds \eqref{f:triangle_S}, \eqref{f:triangle_E++}  do not depend on cardinalities of sets unlike in the classical triangle inequality. 
In the proof we use an idea from the higher energies \cite{SS_higher}.


\begin{theorem}
    Let $A,B,C \subseteq \Gr$ be sets, and $T\ge 1$ be a real number.
    Then 
\begin{equation}\label{f:triangle_S}
    \mathcal{S}_T [B;C] \le \mathcal{E}_T [B;A] \mathcal{S} [A;C] 
    \quad \quad 
    \mbox{and}
    \quad \quad 
    \mathcal{E}_T [B;C] \le \mathcal{E}_T [B;A] \mathcal{E} [A;C] 
    \,. 
\end{equation}
    In particular, one has 
\begin{equation}\label{f:triangle_E++}
    \mathcal{S} [B;C] \le \mathcal{E} [B;A] \mathcal{S} [A;C]
    \quad \quad 
    \mbox{and}
    \quad \quad 
    \mathcal{E} [B;C] \le \mathcal{E} [B;A] \mathcal{E} [A;C] \,. 
\end{equation}
\label{t:triangle_S}
\end{theorem}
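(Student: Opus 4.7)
My approach is a shift argument inside the set $A$. For $X\subseteq B$ and $a\in\Gr$ put $X_a := (X\cap(A+a))-a$, so that $X_a\subseteq A$ and $|X_a| = r_{X-A}(a)$. Note that $X_a$ is a translate of a subset of $X$, which implies $|X_a+Y|=|X_a||Y|$ whenever $|X+Y|=|X||Y|$, and that the pointwise inequalities $r_{X_a-X_a}(s)\le r_{X-X}(s)$ transfer energy information from $X$ to subsets of $A$.

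For the first bound $\mathcal{S}_T[B;C]\le\mathcal{E}_T[B;A]\mathcal{S}[A;C]$, I pick an $\mathcal{S}_T[B;C]$-maximizing pair $X\subseteq B$, $|X|\ge|B|/T$, $Y\subseteq C$ with $|X+Y|=|X||Y|$. The definition of $\mathcal{E}_T[B;A]$ gives $\E(X,A)\ge|X|^2|A|^2/(|B|\mathcal{E}_T[B;A])$, and since $\sum_a r_{X-A}(a)=|X||A|$, pigeonhole produces some $a_0$ with $|X_{a_0}|\ge|X||A|/(|B|\mathcal{E}_T[B;A])$. Then $X_{a_0}\subseteq A$, the sum $X_{a_0}+Y$ is direct, and the definition of $\mathcal{S}[A;C]$ yields $|X_{a_0}||Y|\le|A|\mathcal{S}[A;C]$, which rearranges to the claim.

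For the energy bound $\mathcal{E}_T[B;C]\le\mathcal{E}_T[B;A]\mathcal{E}[A;C]$ the single-shift trick fails: applying $\mathcal{E}[A;C]$ to $X_{a_0}$ would square the loss $|X||A|/(|B|\mathcal{E}_T[B;A])$, which is unaffordable. Instead I average over all shifts. For each $a$ the definition of $\mathcal{E}[A;C]$ gives $\E(X_a,Y)\ge|X_a|^2|Y|^2/(|A|\mathcal{E}[A;C])$; summing and using $\sum_a|X_a|^2=\E(X,A)$ gives
\begin{equation*}
\sum_a \E(X_a,Y) \;\ge\; \frac{|Y|^2\,\E(X,A)}{|A|\,\mathcal{E}[A;C]}\,.
\end{equation*}
The matching upper bound comes from the shift identity
\begin{equation*}
\sum_a \E(X_a,Y) \;=\; \sum_s r_{X-X}(s)\,r_{A-A}(s)\,r_{Y-Y}(s) \;\le\; |A|\,\E(X,Y)\,,
\end{equation*}
which follows from the elementary computation $|(x_1-A)\cap(x_2-A)|=r_{A-A}(x_1-x_2)$ together with the pointwise bound $r_{A-A}(s)\le|A|$. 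Combining the two inequalities and invoking $\E(X,A)\ge|X|^2|A|^2/(|B|\mathcal{E}_T[B;A])$ yields $\E(X,Y)\ge|X|^2|Y|^2/(|B|\mathcal{E}_T[B;A]\mathcal{E}[A;C])$, which is exactly $\mathcal{E}_T[B;C]\le\mathcal{E}_T[B;A]\mathcal{E}[A;C]$. Finally, \eqref{f:triangle_E++} is the $T=\infty$ specialization of \eqref{f:triangle_S}.

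The main technical point — and the place where the higher-energy philosophy of \cite{SS_higher} enters — is the averaging identity above, which converts an energy lower bound on $\E(X,A)$ into a lower bound on $\E(X,Y)$ with no loss beyond the two factors $\mathcal{E}_T[B;A]$ and $\mathcal{E}[A;C]$; the pointwise bound $r_{A-A}(s)\le|A|$ is exactly what makes the linear average (rather than one best shift) payoff without squaring the error.
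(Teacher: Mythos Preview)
Your proof is correct and follows essentially the same approach as the paper's: the same shift-and-pigeonhole argument for the first inequality, and the same averaging identity $\sum_a \E(X_a,Y)=\sum_s r_{X-X}(s)r_{A-A}(s)r_{Y-Y}(s)$ combined with the trivial bound $r_{A-A}(s)\le |A|$ for the second. Your write-up is in fact slightly more explicit than the paper's about why the upper bound $\sum_a \E(X_a,Y)\le |A|\,\E(X,Y)$ holds, but the argument is the same.
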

\begin{proof}
    Find $X\subseteq B$, $Y\subseteq C$ such that $|X| \ge |B|/T$, $|X+Y| = |X||Y|$ and $|X||Y| = |B| \mathcal{S}_T [B;C]$. 
    We have 
\begin{equation}\label{tmp:13.08_-1}
    \sum_{z} |A\cap (X+z)|^2 = \E(X,A) \ge \frac{|X|^2 |A|^2}{\mathcal{E}_T [B;A] |B|}
\end{equation}
    and therefore there is $z$ such that the set $A':= A\cap (X+z)$ has size at least $|A||X|/(\mathcal{E}_T [B;A] |B|)$. 
    But then
\[
    \mathcal{S} [A;C] \ge \frac{|A'||Y|}{|A|} \ge \frac{|X||Y|}{\mathcal{E}_T [B;A]|B|} 
    = 
    \frac{\mathcal{S}_T [B;C]}{\mathcal{E}_T [B;A]}
\]
    as required. 

    Similarly, take $X\subseteq B$, $Y\subseteq C$ such that $|X| \ge |B|/T$ and 
\begin{equation}\label{tmp:13.08_1}
    |X|^2 |Y|^2 = |B| \mathcal{E}_T [B;C] \E(X,Y) \,.
\end{equation}
    For an arbitrary $z\in \Gr$ put $A^X_z = A \cap (X+z)$. 
    Then by  definition, we have 
\[
    \E(A^X_z, Y) \cdot |A| \mathcal{E} [A;C]  \ge |A^X_z|^2 |Y|^2 \,.
\]
    Summing the last inequality over all $z$, we get 
\[
    |A| \mathcal{E} [A;C] \cdot \sum_z \E(A^X_z, Y)
    = 
    |A| \mathcal{E} [A;C] \cdot \sum_w r_{A-A} (w) r_{X-X} (w) r_{Y-Y} (w)
    \ge |Y|^2 \E(X,A) \,.
\]
    Combining this estimate, as well as formulae \eqref{tmp:13.08_-1}, \eqref{tmp:13.08_1}, we obtain 
\[
    |A|^2 \mathcal{E} [A;C] \E(X,Y) \ge |Y|^2 \frac{|X|^2 |A|^2}{\mathcal{E}_T [B;A] |B|} 
    \ge 
    \frac{|A|^2 \mathcal{E}_T [B;C] \E(X,Y) }{\mathcal{E}_T [B;A]} 
\]
    as required. 
    Finally, taking 
    $T=\infty$, we get 
    \eqref{f:triangle_E++}.
    This completes the proof. 
$\hfill\Box$
\end{proof}

\bibliographystyle{abbrv}

\bibliography{bibliography}{}

\noindent{I.D.~Shkredov\\
\\
{\tt ilya.shkredov@gmail.com}

\end{document}

    As above, we find $A':= A\cap (X+z)$ with 
    $|A'| \ge |A||X|/(\mathcal{E}_T [B;A] |B|)$  
    and therefore thanks to formula \eqref{tmp:13.08_1}, we obtain 
\[
    \mathcal{E} [A;C] 
    \ge 
        \frac{|A'|^2 |Y|^2}{|A| \E(A',Y)} 
    \ge 
        \frac{|A'|^2 |Y|^2}{|A| \E(X,Y)}
    \ge 
        \frac{|A||X|^2 |Y|^2}{|B|^2 \mathcal{E}^2_T [B;A] \E(X,Y)}
        =
        \frac{|A|}{|B| \mathcal{E}^2_T [B;A]}   
    = \frac{|A'|^2 |B| \mathcal{E}_T [B;C]}{|A| |X|^2} 
    \ge 
\]